\newtheorem{theorem}{Theorem}[section]
\newtheorem{lemma}[theorem]{Lemma}
\newtheorem{remark}[theorem]{Remark}
\newenvironment{proof}[1][Proof.]{\begin{trivlist}
     \item[\hskip \labelsep {\bfseries #1}]}{\end{trivlist}}
\begin{document}


\title{The Two-Point Connection Problem for a Sub-Class of the Heun Equation}
\author{R. Williams}%
\email{rwilliam@ictp.it}
\affiliation{%
Mathematics Section, \\ International Center for Theoretical Physics, Trieste, Italy 
}%
\author{D. Batic}
\email{davide.batic@uwimona.edu.jm}
\affiliation{%
Department of Mathematics,\\  University of the West Indies, Kingston 6, Jamaica 
}%

\date{\today}

\begin{abstract}
The present article discusses the two point connection problem for Heun's differential equation. We employ a contour integral method to derive connection matrices for a sub-class of the Heun equation containing 3 free parameters. Explicit expressions for the connection coefficients are found.
\end{abstract}

\pacs{02.30.Hq}
\keywords{Heun equation, two point connection problem, connection coefficients, special functions}
\maketitle

\section{\label{Intro} Introduction}
Sch\"afke and Schmidt \cite{Schaefke}, \cite{Schmidt1}, and \cite{Schmidt2} studied two point connection problems between pairs of solutions around neighboring singularities using a contour integral approach based on the Cauchy Integral Formula. In \cite{Schmidt2}, they studied in particular the connection problem between pairs of solutions around regular singularities. They obtained expressions for the connection coefficients as a limit of a sequence involving the coefficients in the Frobenius expansion of the solution around 0. The shortcoming of this method is that it assumes these coefficients are known. For the Hypergeometric equation (see \cite{Erdely}), this is not a problem as the coefficients satisfy a two-term recurrence relation which is easy to solve. However this is not true for the Heun equation. The required coefficients are solutions of a three-term recurrence relation for which there is no known explicit solution in the general case. In this paper, we will modify the methods used by \cite{
Schmidt2} to fully solve the connection problem for a subclass of the Heun Equation for which this recurrence relation can be solved explicitly. We give explicit expressions for the connection coefficients. 
\\
\indent The Heun equation is an increasingly important equation which appears more and more frequently in the literature. Much of the work surrounding the Heun function involves finding integral representations. Several integral representations for the Hypergeometric function are known. These provide a succesful strategy for solving the two-point connection problem for the Hypergeometric equation (see \cite{Erdely} for details). In this paper we solve the two-point connection for a subclass of Heun equation without using any integral representations, thus illustrating the power of the strategy employed by Sch\"afke and Schmidt. 
\\
\indent We consider the two-point connection problem for the Huen equation \cite{Ronveaux} given by 
\begin{equation}\label{heun}
\frac{d^2y}{dz^2} + \left(\frac{\gamma}{z} + \frac{\delta}{z-1} + \frac{\epsilon}{z-a} \right)\frac{dy}{dz} + \frac{\alpha\beta z - q}{z(z-1)(z-a)}y = 0,
\end{equation}
with $a, q\in\mathbb{C}$ and where $\epsilon = \alpha+\beta+1-\gamma-\delta$ and $a \neq 0, 1$. It is well known that equation (\ref{heun}) has regular singularities at $0$, $1$, $a$, and $\infty$. Furthermore, equation (\ref{heun}) has a Frobenius solution which is regular for $|z|<\min\{1,|a|\}$ and is denoted $Hl(a, q; \alpha, \beta, \gamma, \delta; z)$, the local Heun function. Note that $Hl$ is normalized so that $Hl(a, q; \alpha, \beta, \gamma, \delta; 0) = 1$. The coefficients in the expansion 
\[
Hl(a, q; \alpha, \beta, \gamma, \delta; z) = \sum\limits_{k=0}^\infty A_k z^k, \quad |z|<\min\{1,|a|\}
\]
satisfy the well known \cite{Ronveaux} recurrence relation
\begin{equation} \label{recurrence}
 \begin{aligned}
  0 &= aA_1\gamma-qA_0, \\
  0 &= aP_k A_{k+1} - [Q_k+q]A_k+R_kA_{k-1}, \hspace{1cm} k \geq 1,
 \end{aligned}
\end{equation}
where $P_k = (k+1)(k+\gamma)$, $Q_k = k(k-1+\gamma)(1+a)+k(a\delta+\epsilon)$, $R_k = (k-1+\alpha)(k-1+\beta)$, and $A_0 = 1$.
Maier \cite{Maier} described the fundamental pairs of local Frobenius solutions to equation (\ref{heun}) and gave various relations satisfied by the local Heun function. We donote these pairs of solutions by $\{y_{01},y_{02}\}$, $\{y_{11},y_{12}\}$, $\{y_{a1},y_{a2}\}$, and $\{y_{\infty 1},y_{\infty 2}\}$ where
\begin{align*}
y_{01}(z) &= Hl(a, q; \alpha, \beta, \gamma, \delta; z), \\
y_{02}(z) &= z^{1-\gamma}Hl(a,q+(1-\gamma)(\alpha+\beta+1-\gamma +(a-1)\delta);1+\alpha-\gamma,1+\beta-\gamma,2-\gamma,\delta;z), \\
&
\\
y_{11}(z) &= Hl(1-a,\alpha\beta - q;\alpha,\beta,\delta,\gamma;1-z), \\
y_{12}(z) &= (1-z)^{1-\delta}Hl(1-a,\alpha\beta-q+(1-\delta)(\epsilon+(1-a)\gamma);1+\alpha-\delta;1+\beta-\delta,2-\delta,\gamma;1-z)\\
&\\
y_{a1}(z) &= Hl\left(\frac{a}{a-1},\frac{a\alpha\beta - q}{a-1};\alpha,\beta,\alpha+\beta+1-\gamma-\delta,\delta;\frac{z-a}{1-a}\right), \\
y_{a2}(z) &= (z-a)^{1-\epsilon} Hl\left(\frac{a}{a-1},\frac{-q+a(\gamma+\delta-\alpha)(\gamma+\delta-\beta)+\gamma(\epsilon-1)}{a-1};\gamma+\delta-\beta,\gamma+\delta-\alpha,2-\epsilon,\delta;\frac{z-a}{1-a}\right), \\
& \\
y_{\infty 1}(z) &= z^{-\alpha}Hl\left(\frac{1}{a},\frac{q+\alpha(a(\alpha+1-\gamma-\delta)+\delta-\beta)}{a}; \alpha,\alpha+1-\gamma,1+\alpha-\beta,\delta;z^{-1}\right), \\
y_{\infty 2}(z) &= z^{-\beta}Hl\left(\frac{1}{a},\frac{q+\beta(a(\beta+1-\gamma-\delta)+\delta-\alpha)}{a};\beta,\beta+1-\gamma,1+\beta-\alpha,\delta;z^{-1} \right).
\end{align*}
In order for these pairs to be linearly independent and  well-defined, we require that $\gamma,~\delta,~\epsilon,~\alpha -\beta \notin \mathbb{Z}$.

\section{Preliminaries}\label{pro}
In this paper we consider the subclass of equation (\ref{heun}) where $\delta=(\alpha+\beta+1-\gamma)/2$, $q=0$, and $a=-1$. That is, we consider the Fuchsian equation
\begin{equation} \label{subClass}
\frac{d^2y}{dz^2} + \left( \frac{\gamma}{z} + \frac{\delta}{z-1} + \frac{\delta}{z+1}\right)\frac{dy}{dz} + \frac{\alpha\beta }{(z-1)(z+1)}y = 0.
\end{equation}
\begin{remark}
 Note that for this subclass of the Heun Equation, the methods employed in \cite{Schmidt2} cannot be immediately applied since they assumed that the differential equation has no other singularity in the closed disk $\{z\in\mathbb{C}: |z|\leq 1\}$ besides 0 and 1, whereas (\ref{subClass}) has singularities at $-1$, $0$, and $1$.
\end{remark}

It is not difficult to see that (\ref{recurrence}) becomes
\begin{equation*}
A_{k+1} = \frac{(k-1+\alpha)(k-1+\beta)}{(k+1)(k+\gamma)}A_{k-1}, \quad k \geq 1.
\end{equation*}
Whence we obtain
\begin{equation}\label{coeffs}
A_{2n} = \frac{(\frac{\alpha}{2})_n(\frac{\beta}{2})_n}{n!(\frac{\gamma+1}{2})_n}, \quad n \geq 0,
\end{equation}
and $A_{2n+1} = 0,~\forall n \geq 0$. Hence,
\begin{equation}
Hl(-1,0;\alpha,\beta,\gamma,\delta;z) = \sum\limits_{n=0}^\infty \frac{(\frac{\alpha}{2})_n(\frac{\beta}{2})_n}{n!(\frac{\gamma+1}{2})_n} z^{2n}.
\end{equation}
Equation (\ref{subClass}) has fundamental pairs of solutions given by 
\begin{equation} \label{sols1}
\begin{aligned}
&y_{01}(z) = Hl(-1,0;\alpha,\beta,\gamma,\delta;z) \\
&y_{02}(z) = z^{1-\gamma}Hl(-1,0;1+\alpha-\gamma,1+\beta-\gamma,2-\gamma,\delta;z), \\
& \\
&y_{+1}(z) = Hl(2,\alpha\beta;\alpha,\beta,\delta,\gamma;1-z),\\
&y_{+2}(z) = (1-z)^{1-\delta}Hl(2,\alpha\beta+(1-\delta)(\delta+2\gamma);1+\alpha-\delta;1+\beta-\delta,2-\delta,\gamma;1-z),\\
& \\
&y_{-1}(z) = Hl(1/2,\alpha\beta/2;\alpha,\beta,\alpha+\beta+1-\gamma-\delta,\delta;(z+1)/2),\\
&y_{-2}(z) = (z+1)^{1-\delta}Hl(1/2,[(\gamma+\delta-\alpha)(2\gamma+\delta-\beta)-\gamma\beta]/2;\gamma+\delta-\beta,\gamma+\delta-\alpha,2-\delta,\delta;(z+1)/2),\\
& \\
&y_{\infty 1}(z) = z^{-\alpha}Hl\left(-1,0; \alpha,\alpha+1-\gamma,1+\alpha-\beta,\delta;z^{-1}\right),\\
&y_{\infty 2}(z) = z^{-\beta}Hl\left(-1,0;\beta,\beta+1-\gamma,1+\beta-\alpha,\delta;z^{-1} \right). 
\end{aligned}
\end{equation}
\begin{remark}\label{extension}
 Note that any solution of (\ref{heun}) may be analytically continued along any path in $\mathbb{C}-\{0,1,a\}$ and the anayltic continuation is a solution (see for example Theorem 3.7.2 in \cite{Ablo} or 10.1 in \cite{Cop}). If two paths are homotopic then the continuation is unique by the Monodromy Theorem (see \cite{Ahlfors} or \cite{Conway}). Thus, if the domain $D \subset \mathbb{C}-\{0,1,a\}$ is simply connected and has non-empty intersection with the open disc $\{z\in\mathbb{C}:|z|<1\}$, then in particular it is not difficult to see that $y_{01}$ has a unique analytic extension to D.
\end{remark}
We will also find the following results helpful in proving our main result.

\begin{lemma}\label{beta_function}
If $\Re{\alpha}$, $\Re{\beta}>0$, then
\[
\int_0^1 t^{\alpha-1}(1-t)^{\beta-1}~dt = \frac{\Gamma(\alpha)\Gamma(\beta)}{\Gamma(\alpha + \beta)}.
\]
\end{lemma}
\begin{proof}
 This is a standard result about the Euler-Beta Function. For a proof, see, for example, Section 1.5 in \cite{Erdely}.
\end{proof}
\begin{lemma} \label{ratio} {\bf(Asymptotics of ratio of two gamma functions)} \newline
In the intersection of the sectors $|\arg (z+a)|<\pi$ and $|\arg (z+b)|<\pi$, we have
\[
\frac{\Gamma(z+a)}{\Gamma(z+b)} \sim z^{a-b}, \quad z\to \infty
\]
\end{lemma}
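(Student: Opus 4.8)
The plan is to deduce this from Stirling's asymptotic expansion of the Gamma function. Recall (see, e.g., \cite{Erdely}) that
\[
\log\Gamma(w) = \left(w-\tfrac12\right)\log w - w + \tfrac12\log(2\pi) + O\!\left(\tfrac{1}{w}\right), \qquad w\to\infty,\quad |\arg w|<\pi .
\]
First I would note that when $z\to\infty$ inside the intersection of the sectors $|\arg(z+a)|<\pi$ and $|\arg(z+b)|<\pi$, both arguments $w=z+a$ and $w=z+b$ tend to infinity within the region of validity of Stirling's formula, so the expansion above may be applied to $\Gamma(z+a)$ and to $\Gamma(z+b)$ simultaneously.

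Next I would subtract the two expansions and simplify. Writing $\log(z+a)=\log z+\log\!\bigl(1+\tfrac{a}{z}\bigr)$ and likewise for $b$, with all logarithms in their principal branch, one has $\log\!\bigl(1+\tfrac{a}{z}\bigr)=\tfrac{a}{z}+O(z^{-2})$ for $|z|$ large. Substituting this into
\[
\log\frac{\Gamma(z+a)}{\Gamma(z+b)} = \left(z+a-\tfrac12\right)\log(z+a) - \left(z+b-\tfrac12\right)\log(z+b) - (a-b) + O\!\left(\tfrac{1}{z}\right),
\]
the terms in $z\log z$ cancel, the $O(1)$ contributions coming from $\bigl(z+a-\tfrac12\bigr)\tfrac{a}{z}$ and $\bigl(z+b-\tfrac12\bigr)\tfrac{b}{z}$ combine with the explicit $-(a-b)$ to vanish, and what survives is $\log\dfrac{\Gamma(z+a)}{\Gamma(z+b)}=(a-b)\log z+O(z^{-1})$. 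Exponentiating gives $\Gamma(z+a)/\Gamma(z+b)=z^{a-b}\bigl(1+O(z^{-1})\bigr)$, which is precisely the asserted relation $\Gamma(z+a)/\Gamma(z+b)\sim z^{a-b}$.

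The point requiring the most care — and the reason the two sector hypotheses are imposed — is the bookkeeping of branches of the logarithm: one must verify that throughout the admissible region $\log(z+a)$, $\log(z+b)$ and $\log z$ may all be taken as principal values and that $\log(z+a)-\log z\to0$ (and similarly with $b$) as $z\to\infty$. This is exactly what $|\arg(z+a)|<\pi$ and $|\arg(z+b)|<\pi$ guarantee, since they keep $z+a$ and $z+b$ off the cut along the negative real axis; for $|z|$ large these conditions also confine $\arg z$ to a closed subsector in which Stirling's expansion holds uniformly. Once this branch bookkeeping is settled, the remaining estimates are routine and no further ingredient is needed.
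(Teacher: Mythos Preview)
Your derivation via Stirling's expansion is correct and is exactly the route the paper indicates: its own ``proof'' simply states that the result ``may be readily derived from Stirling's Series'' and refers to Olver for an alternative argument. In other words, you have supplied the details the paper leaves implicit, so there is nothing to add.
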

\begin{proof}
 This is a standard result which may be readily derived from Stirling's Series. For an alternative proof, see pg. 118 in \cite{Olver}.
\end{proof}
\begin{lemma} \label{integral}
Let $1<\rho<2$ and $\alpha \in \mathbb{C}$. If $k>\Re\alpha$ and $\Re\alpha>0$, then
\begin{equation*}
\int_1^\rho \! (z-1)^{\alpha}z^{-k-1} \mathrm{d}z = \frac{\Gamma(k-\alpha)\Gamma(\alpha+1)}{\Gamma(k+1)}+\mathcal{O}(\rho^{-k}) , \quad k\to \infty.
\end{equation*}
Let $F:B_1(1)\to \mathbb{C}$ be holomorphic. Then, 
\begin{equation*}
\int_1^{\rho} \! (z-1)^{\alpha}z^{-k-1}F(z) \mathrm{d}z = \mathcal{O}(k^{-\Re\alpha-1}), \quad k\to \infty.
\end{equation*}
where the powers in the above integral take their principal values.
\end{lemma}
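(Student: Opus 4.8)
The plan is to handle the two assertions separately; in both cases everything reduces to the Euler Beta integral of Lemma \ref{beta_function} and the gamma-ratio asymptotics of Lemma \ref{ratio}.

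For the first identity I would extend the contour to infinity,
\[
\int_1^\rho (z-1)^{\alpha}z^{-k-1}\,\mathrm{d}z \;=\; \int_1^\infty (z-1)^{\alpha}z^{-k-1}\,\mathrm{d}z \;-\; \int_\rho^\infty (z-1)^{\alpha}z^{-k-1}\,\mathrm{d}z ,
\]
which is legitimate because $\Re\alpha>0$ makes the integrand integrable at $z=1$ while $k>\Re\alpha$ makes it integrable at infinity. In the first of these integrals I substitute $z=1/u$; since the path runs along the positive real axis the principal determinations of $(z-1)^{\alpha}$ and of the resulting $(1-u)^{\alpha}u^{-\alpha}$ agree, and an elementary computation turns it into $\int_0^1 u^{(k-\alpha)-1}(1-u)^{(\alpha+1)-1}\,\mathrm{d}u$. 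As $\Re(k-\alpha)>0$ and $\Re(\alpha+1)>0$, Lemma \ref{beta_function} evaluates this integral as $\Gamma(k-\alpha)\Gamma(\alpha+1)/\Gamma(k+1)$. For the tail, the bound $|(z-1)^{\alpha}|=(z-1)^{\Re\alpha}\le z^{\Re\alpha}$, valid for $z\ge\rho>1$ since $\Re\alpha>0$, gives
\[
\left|\int_\rho^\infty (z-1)^{\alpha}z^{-k-1}\,\mathrm{d}z\right| \;\le\; \int_\rho^\infty z^{\Re\alpha-k-1}\,\mathrm{d}z \;=\; \frac{\rho^{\Re\alpha-k}}{k-\Re\alpha} \;=\; \mathcal{O}(\rho^{-k}), \qquad k\to\infty ,
\]
and adding the two pieces yields the claimed expansion.

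For the second identity, holomorphy of $F$ on $B_1(1)$ is used only to guarantee that $F$ is continuous, hence bounded by some constant $M$, on the compact segment $[1,\rho]$, which lies in $B_1(1)$ because $\rho<2$. Then
\[
\left|\int_1^{\rho}(z-1)^{\alpha}z^{-k-1}F(z)\,\mathrm{d}z\right| \;\le\; M\int_1^{\rho}(z-1)^{\Re\alpha}z^{-k-1}\,\mathrm{d}z \;\le\; M\int_1^{\infty}(z-1)^{\Re\alpha}z^{-k-1}\,\mathrm{d}z .
\]
The identity just established, applied with the real exponent $\Re\alpha$ in place of $\alpha$ (the hypotheses $\Re\alpha>0$ and $k>\Re\alpha$ being precisely what is required), bounds the last integral by $M\,\Gamma(k-\Re\alpha)\Gamma(\Re\alpha+1)/\Gamma(k+1)$, and Lemma \ref{ratio} gives $\Gamma(k-\Re\alpha)/\Gamma(k+1)\sim k^{-\Re\alpha-1}$; hence the integral is $\mathcal{O}(k^{-\Re\alpha-1})$, as claimed.

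The calculation is elementary throughout. The only steps that need a little care are the branch bookkeeping in the substitution $z=1/u$ — one must check that restricting to the positive real axis renders the principal determinations of $(z-1)^{\alpha}$, $(1-u)^{\alpha}$ and $u^{-\alpha}$ mutually consistent — and verifying that the convergence conditions $\Re(k-\alpha)>0$ and $\Re(\alpha+1)>0$ demanded by Lemma \ref{beta_function} follow from the stated hypotheses; both are immediate.
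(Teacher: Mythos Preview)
Your proposal is correct and follows essentially the same route as the paper: split $\int_1^\rho=\int_1^\infty-\int_\rho^\infty$, evaluate the infinite integral via $z=1/u$ and Lemma~\ref{beta_function}, bound the tail as $\mathcal{O}(\rho^{-k})$, and for the second part bound $|F|\le M$ on $[1,\rho]$, majorize by $\int_1^\infty$, and invoke the Beta evaluation together with Lemma~\ref{ratio}. The only cosmetic difference is that for the tail you bound $(z-1)^{\Re\alpha}\le z^{\Re\alpha}$ directly, whereas the paper first substitutes $z=1/t$ and then bounds $(1-t)^{\Re\alpha}\le 1$; both yield $\rho^{\Re\alpha-k}/(k-\Re\alpha)$.
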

\begin{proof} Observe that
\[
\int_1^\rho \! (z-1)^{\alpha}z^{-k-1} \mathrm{d}z = \int_1^\infty \! (z-1)^{\alpha}z^{-k-1} \mathrm{d}z - \int_\rho^\infty \! (z-1)^{\alpha}z^{-k-1} \mathrm{d}z.
\]
Since $k >\Re\alpha$ and $\Re\alpha>0$, using the transformation $z = 1/t$ we obtain
\[
\left| \int_\rho^\infty \! (z-1)^{\alpha}z^{-k-1} \mathrm{d}z \right| = \left| \int_0^{\frac{1}{\rho}} \! t^{k-1-\alpha}(1-t)^{\alpha} \mathrm{d}t \right| \leq  \int_0^{\frac{1}{\rho}} \! x^{k-1-\Re\alpha} \mathrm{d}x = \frac{\rho^{-k+\Re\alpha}}{k-\Re\alpha}.
\]
Hence,
\begin{equation*}
\int_\rho^\infty \! (z-1)^{\alpha}z^{-k-1} \mathrm{d}z = \mathcal{O}(\rho^{-k}), \quad k\to \infty.
\end{equation*}
Also, 
\[
\int_1^\infty \! (z-1)^{\alpha}z^{-k-1} \mathrm{d}z \stackrel{z=1/t}{=} \int_0^1 \! (1-t)^{\alpha}t^{k-\alpha-1} \mathrm{d}t \stackrel{\ref{beta_function}}{=} \frac{\Gamma(k-\alpha)\Gamma(\alpha+1)}{\Gamma(k+1)}.
\]
We prove now the second part of the lemma. Notice that since $F$ is holomorphic we may find an $M \in \mathbb{R}^+$ such that
\begin{align*}
\left|\int_1^{\rho} \! (z-1)^{\alpha}z^{-k-1}F(z) \mathrm{d}z \right| &\leq M \int_1^\rho \! x^{-k-1}(x-1)^{\Re\alpha} \mathrm{d}x \leq M \int_1^\infty \! x^{-k-1}(x-1)^{\Re\alpha} \mathrm{d}x, \\
&\stackrel{x=1/t}{=} M \int_0^1 \! t^{k-1-\Re\alpha}(1-t)^{\Re\alpha} \mathrm{d}t \stackrel{\ref{beta_function}}{=} M \frac{\Gamma(k-\Re\alpha)\Gamma(\Re\alpha+1)}{\Gamma(k+1)}.
\end{align*}
Using Lemma~\ref{ratio}, we get
\begin{equation*}
\int_1^{\rho} \! (z-1)^{\alpha}z^{-k-1}F(z) \mathrm{d}z = \mathcal{O}(k^{-\Re\alpha-1}), \quad k\to\infty.
\end{equation*}
This concludes the proof. ~~$\square$
\end{proof}

\section{Solution of The Two-Point Connection Problem}\label{sol}
We consider simultaneously the two-point connection problem between 0 and 1 and between 0 and -1. That is we seek coefficients $c^+_{11},~c^+_{12},~c^-_{11},~c^-_{12}$ such that 
\begin{eqnarray} \label{connection}
y_{01} &=& c^+_{11}y_{+1} + c^+_{12}y_{+2} \label{connection1} \\
y_{01} &=& c^-_{11}y_{-1} + c^-_{12}y_{-2} \label{connection2}
\end{eqnarray}
Note however, that $c^+_{11}$ and $c^-_{11}$ may be easily found. We recall the well-known result \cite{Erdely}
\[
\lim\limits_{z\to 1+} F(\alpha,\beta,\gamma;z)=\frac{\Gamma(\gamma-\alpha-\beta)\Gamma(\gamma)}{\Gamma(\gamma-\alpha)\Gamma(\gamma-\beta)}
\]
Using this relation and taking the limit of (\ref{connection1}) as $z \to 1$ and assuming $\Re(1-\delta)>0$ we obtain
\[
c^+_{11} = \sum\limits_{n=0}^\infty \frac{(\frac{\alpha}{2})_n(\frac{\beta}{2})_n}{n!(\frac{\gamma+1}{2})_n} = \lim\limits_{z\to 1+} F\left(\frac{\alpha}{2},\frac{\beta}{2},\frac{\gamma+1}{2};z\right) = \frac{\Gamma(\frac{\gamma+1}{2}-\frac{\alpha}{2}-\frac{\beta}{2})\Gamma(\frac{\gamma+1}{2})}{\Gamma(\frac{\gamma+1}{2}-\frac{\alpha}{2})\Gamma(\frac{\gamma+1}{2}-\frac{\beta}{2})},
\]
and similarly taking the limit as $z \to -1$ of (\ref{connection2}) we obtain
\[
c^-_{11} = \frac{\Gamma(\frac{\gamma+1}{2}-\frac{\alpha}{2}-\frac{\beta}{2})\Gamma(\frac{\gamma+1}{2})}{\Gamma(\frac{\gamma+1}{2}-\frac{\alpha}{2})\Gamma(\frac{\gamma+1}{2}-\frac{\beta}{2})} = c^+_{11}.
\]
We compute $c^+_{12}$ and $c^-_{12}$ with the following theorem.
\begin{theorem}
Let $y_{01},~y_{+1},~y_{+2},~y_{-1},~y_{-2}$ be as in (\ref{sols1}) and $c^+_{11},~c^+_{12},~c^-_{11},~c^-_{12}$ be as in (\ref{connection1}) and (\ref{connection2}).Furthermore let $\Re(1-\delta)>0$, then we have
\[
c^+_{12} = 2^{1-\delta}\frac{\Gamma(\delta-1)\Gamma(\frac{\gamma+1}{2})}{\Gamma(\frac{\alpha}{2})\Gamma(\frac{\beta}{2})} = c^-_{12},
\]
\end{theorem}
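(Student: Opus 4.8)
The plan is to read off $c^+_{12}$ from the large-order behaviour of the Taylor coefficients $A_k$ of $y_{01}$ at the origin, in the contour-integral spirit of Sch\"afke and Schmidt. (One could instead observe that $y_{01}(z)={}_2F_1(\alpha/2,\beta/2;(\gamma+1)/2;z^2)$ and invoke the Gauss connection formula, but we shall follow the contour method.) By Cauchy's formula, $A_k=\frac{1}{2\pi i}\oint_{|z|=r}y_{01}(z)z^{-k-1}\,dz$ for any $r<1$. Fix $\rho\in(1,2)$; since $y_{01}$ is holomorphic at $0$ and has no monodromy there, Remark~\ref{extension} provides a holomorphic extension of $y_{01}$ to the simply connected slit disc $D=B_\rho(0)\setminus\bigl([1,\rho]\cup[-\rho,-1]\bigr)$, so we may deform the contour onto $\partial D$. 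This expresses $A_k$ as an integral over $|z|=\rho$ — which is $O(\rho^{-k})$ — plus two loop integrals, one around each slit.

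On the slit $[1,\rho]$, relation (\ref{connection1}) shows that the multivaluedness of $y_{01}$ lies entirely in the factor $(1-z)^{1-\delta}$ of $y_{+2}$: writing $y_{+2}(z)=(1-z)^{1-\delta}h_+(1-z)$, the functions $h_+$ and $y_{+1}$ are local Heun functions whose first parameter is $2$, so their series in $1-z$ converge for $|1-z|<\min\{1,2\}=1$ and they are holomorphic (hence single-valued) on a neighbourhood of $[1,\rho]\subset B_1(1)$, with $h_+(0)=1$. Across the cut $x\in(1,\rho)$ the principal branch of $(1-z)^{1-\delta}$ has jump (upper value minus lower value) $\bigl(e^{-i\pi(1-\delta)}-e^{i\pi(1-\delta)}\bigr)(x-1)^{1-\delta}=-2i\sin(\pi(1-\delta))\,(x-1)^{1-\delta}$, so the slit-$[1,\rho]$ contribution to $A_k$ is $-\frac{\sin(\pi(1-\delta))}{\pi}\,c^+_{12}\int_1^\rho(z-1)^{1-\delta}z^{-k-1}h_+(1-z)\,dz$. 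Writing $h_+(1-z)=1+(z-1)\hat F(z)$ with $\hat F$ holomorphic on $B_1(1)$ and applying Lemma~\ref{integral} (its first part with exponent $1-\delta$, legitimate since $\Re(1-\delta)>0$; its second part with exponent $2-\delta$ for the remainder), this contribution equals, after using the reflection formula $\Gamma(2-\delta)\Gamma(\delta-1)=-\pi/\sin(\pi(1-\delta))$,
\[
\frac{c^+_{12}}{\Gamma(\delta-1)}\,\frac{\Gamma(k-1+\delta)}{\Gamma(k+1)}+O\bigl(k^{-\Re(1-\delta)-2}\bigr)+O(\rho^{-k}).
\]
The change of variable $z\mapsto -z$ — under which (\ref{subClass}) is invariant, $y_{01}$ is even by (\ref{coeffs}), and $z^{-k-1}\,dz$ is invariant for even $k$ — maps the slit-$[-\rho,-1]$ loop integral onto the slit-$[1,\rho]$ one, so for $k=2n$,
\[
A_{2n}=\frac{2c^+_{12}}{\Gamma(\delta-1)}\,\frac{\Gamma(2n-1+\delta)}{\Gamma(2n+1)}+O\bigl(n^{-\Re(1-\delta)-2}\bigr)+O(\rho^{-2n}).
\]

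On the other hand, (\ref{coeffs}) gives $A_{2n}=\frac{\Gamma(\frac\alpha2+n)\Gamma(\frac\beta2+n)\Gamma(\frac{\gamma+1}{2})}{\Gamma(\frac\alpha2)\Gamma(\frac\beta2)\,n!\,\Gamma(\frac{\gamma+1}{2}+n)}$, so Lemma~\ref{ratio} yields $A_{2n}\sim\frac{\Gamma(\frac{\gamma+1}{2})}{\Gamma(\frac\alpha2)\Gamma(\frac\beta2)}\,n^{\delta-2}$, while $\Gamma(2n-1+\delta)/\Gamma(2n+1)\sim(2n)^{\delta-2}$. Since the error terms in the displayed formula are $O(n^{-1})$ relative to the main term, dividing by $n^{\delta-2}$ and letting $n\to\infty$ forces $\frac{2c^+_{12}}{\Gamma(\delta-1)}\,2^{\delta-2}=\frac{\Gamma(\frac{\gamma+1}{2})}{\Gamma(\frac\alpha2)\Gamma(\frac\beta2)}$, i.e. $c^+_{12}=2^{1-\delta}\frac{\Gamma(\delta-1)\Gamma(\frac{\gamma+1}{2})}{\Gamma(\frac\alpha2)\Gamma(\frac\beta2)}$. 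Finally $c^-_{12}=c^+_{12}$: since (\ref{subClass}) is invariant under $z\mapsto -z$ and $Hl(\,\cdot\,;0)=1$, one checks $y_{+1}(-z)=y_{-1}(z)$ and $y_{+2}(-z)=y_{-2}(z)$, so substituting $z\mapsto -z$ in (\ref{connection1}) and comparing with (\ref{connection2}) gives $c^-_{11}=c^+_{11}$ and $c^-_{12}=c^+_{12}$.

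The step needing genuine care, rather than mere bookkeeping, is the contour analysis: justifying the deformation onto the slit domain, verifying that $y_{+1}$ and $h_+$ really are single-valued on full neighbourhoods of their respective slits so that only the explicit power $(1-z)^{1-\delta}$ contributes to the jump, checking that the small arcs around the branch points $\pm1$ and the outer endpoints $\pm\rho$ give negligible contributions (here $\Re(1-\delta)>0$ is what guarantees integrability at $\pm1$), and pinning down the branch so the jump carries exactly the phase above. Everything after that is an application of Lemmas~\ref{ratio} and~\ref{integral} together with the reflection formula for $\Gamma$.
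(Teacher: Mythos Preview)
Your proof is correct and follows essentially the same contour-integral strategy as the paper: deform the Cauchy integral for $A_k$ onto a slit disc of radius $\rho\in(1,2)$, pick up the jump of $(1\mp z)^{1-\delta}$ across each slit, and invoke Lemmas~\ref{ratio} and~\ref{integral} together with the reflection formula to extract the leading asymptotics.

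The one substantive difference is how the two slits are combined. The paper treats the slits at $+1$ and $-1$ independently, arriving at an expression of the form $c^+_{12}S_k + (-1)^{-k}c^-_{12}S_k + \text{errors}$, and then specializes to $k=2n$ and $k=2n+1$ separately; the odd case uses $d_{2n+1}=0$ to conclude $c^+_{12}=c^-_{12}$, and the even case then gives the value. You instead exploit the $z\mapsto -z$ invariance of (\ref{subClass}) twice: first to identify the $-1$-slit loop integral with the $+1$-slit one for even $k$ (so that $2c^+_{12}$ appears directly), and second to deduce $c^-_{12}=c^+_{12}$ from the solution identities $y_{+j}(-z)=y_{-j}(z)$. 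Your route is a little more economical---only the $m=0$ remainder and only even $k$ are needed---while the paper's route avoids having to verify those solution identities and, by carrying a general $m$-term expansion, in principle delivers a full asymptotic series for $d_k$ rather than just the leading term.
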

\begin{proof}
For simplification, first observe that $y_{01}$ is of the form
\[
y_{01}(z) = \sum\limits_{k=0}^\infty d_k z^k.
\]
\begin{figure}[!ht]
\begin{center}
\includegraphics[scale = 0.75]{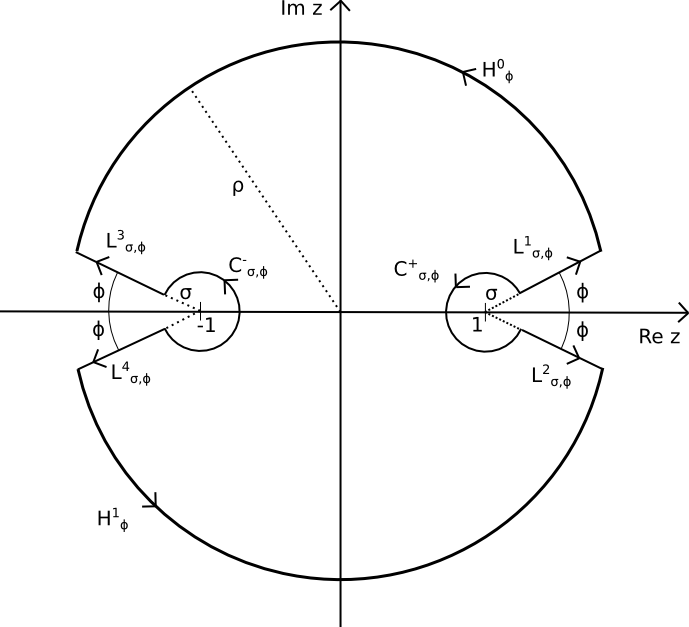}
\caption{Integration contour showing components of $C_{\sigma,\phi}$}\label{Fig1}
\end{center}
\end{figure}
Let $C_{\sigma,\phi} = C^0_{\sigma,\phi}+L^1_{\sigma,\phi}-C^+_{\sigma,\phi}-L^2_{\sigma,\phi}+L^4_{\sigma,\phi}-C^-_{\sigma,\phi}-L^3_{\sigma,\phi}$ be the contour shown in FIG. \ref{Fig1} where $C^0_{\sigma,\phi}= H^0_{\sigma,\phi}+H^1_{\sigma,\phi}$ and $1<\rho<2$. By the Cauchy Integral Formula we have for any $k\in \mathbb{N}_0$, and $\sigma,~\phi >0$ sufficiently small
\begin{equation}\label{1}
d_k = \frac{1}{2\pi i }\int_{C_{\sigma,\phi}} z^{-k-1} \hat{y}_{01}(z) dz
\end{equation}
where $\hat{y}_{01}$ is the unique analytic extension of $y_{01}$ to the simply connected set $\mathbb{C}-((-\infty, -1] \cup [1,+\infty))$ guaranteed to exist by Remark~\ref{extension}. In particular,
\[
\hat{y}_{01}(z) = c^+_{11}y_{+1}(z) + c^+_{12}y_{+2}(z), \quad z\in L^1_{\sigma,\phi} \cup C^+_{\sigma,\phi} \cup L^2_{\sigma,\phi}
\]
\[
\hat{y}_{01}(z) = c^-_{11}y_{-1}(z) + c^-_{12}y_{-2}(z), \quad z\in L^3_{\sigma,\phi} \cup C^-_{\sigma,\phi} \cup L^4_{\sigma,\phi}
\]
where we take the principle value of the powers occuring in $y_{+2}$ and $y_{-2}$. Notice that the left hand side of (\ref{1}) above does not depend on $\sigma$ or $\phi$. Hence we consider the limit of the expression on the right hand side as $\sigma,~\phi \to 0$. This limit if it exists should be equal to $d_k$. So
\[
2\pi i d_k = \lim\limits_{\sigma \to 0} \left(\lim\limits_{\phi \to 0} \left(\int_{C_{\sigma,\phi}} z^{-k-1} \hat{y}_{01}(z) dz \right) \right) = I^0_k + I^1_k + I^2_k
\]
where
\[
I^0_k = I^0_{k,1} - I^0_{k,2} - I^0_{k,3}  
\]
\[
I^1_k = \lim\limits_{\sigma \to 0} \left(\lim\limits_{\phi \to 0} \left(\int_{L^1_{\sigma,\phi}} z^{-k-1} (c^+_{11}y_{+1} + c^+_{12}y_{+2})(z) dz-\int_{L^2_{\sigma,\phi}} z^{-k-1} (c^+_{11}y_{+1} + c^+_{12}y_{+2})(z) dz \right)\right)
\]
\[
I^2_k = \lim\limits_{\sigma \to 0} \left( \lim\limits_{\phi \to 0} \left(\int_{L^4_{\sigma,\phi}} z^{-k-1} (c^-_{11}y_{-1} + c^-_{12}y_{-2})(z) dz - \int_{L^3_{\sigma,\phi}} z^{-k-1} (c^-_{11}y_{-1} + c^-_{12}y_{-2})(z) dz\right)\right)
\]
and
\[
 I^0_{k,1} = \lim\limits_{\sigma \to 0} \left(\lim\limits_{\phi \to 0} \left(\int_{C^0_{\sigma,\phi}} z^{-k-1} \hat{y}_{01}(z) dz\right)\right), \quad I^0_{k,2} = \lim\limits_{\sigma \to 0} \left(\lim\limits_{\phi \to 0} \left( \int_{C^+_{\sigma,\phi}} z^{-k-1} (c^+_{11}y_{+1} + c^+_{12}y_{+2})(z) dz \right)\right) 
\]
\[
 I^0_{k,3} = \lim\limits_{\sigma \to 0} \left(\lim\limits_{\phi \to 0} \left(\int_{C^-_{\sigma,\phi}} z^{-k-1} (c^-_{11}y_{-1} + c^-_{12}y_{-2})(z) dz \right)\right)
\]
First we deal with $I^0_k$. In particular, we will show that $I^0_k = \mathcal{O}(\rho^{-k})$. Note that we have the following parametrizations 
\begin{align*}
 &C^+_{\sigma,\phi} : z(\theta) = 1 + \sigma e^{i\theta}, \quad \phi \leq \theta \leq 2\pi - \phi \\
 &C^-_{\sigma,\phi} : z(\theta) = -1 + \sigma e^{i\theta}, \quad \phi - \pi \leq \theta \leq \pi - \phi
\end{align*}
Since $y_{+1}$ is holomorphic in $\{z:|z-1|<1\}$ and $y_{-1}$ is holomorphic in $\{z:|z+1|<1\}$ we obtain
\[
 I^0_{k,2} =  c^+_{12} \lim\limits_{\sigma \to 0} \left(\lim\limits_{\phi \to 0} \left(\int_{C^+_{\sigma,\phi}} z^{-k-1} y_{+2}(z) dz \right)\right), \quad
 I^0_{k,3} = c^-_{12} \lim\limits_{\sigma \to 0} \left(\lim\limits_{\phi \to 0} \left(\int_{C^-_{\sigma,\phi}} z^{-k-1} y_{-2}(z) dz\right)\right)
\]
We express $y_{+2}$ and $y_{-2}$ as $y_{+2}(z) = (1-z)^{1-\delta}f_1(z)$ and $y_{-2}(z) = (z+1)^{1-\delta}f_2(z)$ where $f_1,~f_2$ are holomorphic functions in the discs of radius 1 centered at $1,~-1$ respectively. Thus, we obtain
\[
 I^0_{k,2} = c^+_{12} \lim\limits_{\sigma \to 0} \left(\lim\limits_{\phi \to 0} \left(\int_{C^+_{\sigma,\phi}} z^{-k-1} (1-z)^{1-\delta}f_1(z) dz\right)\right)
\]
Let M be such that $|f_1(z)|,|f_2(z)| < M$, $\forall z\in \{z\in\mathbb{C}: |z-1|<1\}$ (M is guaranteed to exist since $F_1$ is holomorphic in an open disc centered at 1), then 
\[
 \left|\int_{C^+_{\sigma,\phi}} z^{-k-1} (1-z)^{1-\delta}f_1(z) dz \right| = \left|\int_{\phi}^{2\pi-\phi}(1+\sigma e^{i\theta})^{-k-1}(-\sigma e^{i\theta})^{1-\delta}f_1(1+\sigma e^{i\theta})i\sigma e^{i\theta}d\theta \right| 
\]
\[
 \leq \int_{\phi}^{2\pi-\phi} M |1+\sigma e^{i\theta}|^{-k-1} \sigma^{2-\Re\delta} d\theta \leq M 2^{k+1} \int_{\phi}^{2\pi-\phi} \sigma^{2-\Re\delta} d\theta = M 2^{k+2}(\pi-\phi)\sigma^{2-\Re\delta}
\]

The second inequality follows from the fact that for $\sigma$ sufficiently small $|1+\sigma e^{i\theta}|>1/2$. Since $\Re(1-\delta)>0$, this implies that $I^0_{k,2} = 0$. It may similarly be shown that $I^0_{k,3} = 0$. Note that $y_{01}$ may be extended analytically to simply connected domains $D_1$ and $D_2$ containing $H^0_0$ and $H^1_0$ respectively. By the continuity of these extensions, their absolute values have a common upper bound $M\in \mathbb{R}^+$ on $H^0_0$ and $H^1_0$. Since these extensions also extend $\hat{y}_{01}$ and since $H^0_\phi \subset H^0_0$ and $H^1_\phi \subset H^1_0$, this bound also holds for $z \in H^0_\phi \cup H^1_\phi = C^0_{\sigma, \phi}$. Thus by the M-L Formula (see for example (9) page 83 in \cite{Ahlfors}) we have
\[
 \left|\int_{C^0_{\sigma,\phi}} z^{-k-1} \hat{y}_{01}(z) dz \right| \leq 2\pi M \rho^{-k} 
\]
Hence $I^0_k = I^0_{k,1} = \mathcal{O}(\rho^{-k})$. We now give parametrizations of the contours $L^1_{\sigma,\phi}$, $L^2_{\sigma,\phi}$, $L^3_{\sigma,\phi}$, and $L^4_{\sigma,\phi}$.
\[
L^1_{\sigma,\phi} : z(r) = 1 + re^{i\phi}, \quad L^2_{\sigma,\phi} : z(r) = 1 + re^{-i\phi}, \quad L^3_{\sigma,\phi} : z(r) = -1 - re^{-i\phi}, \quad L^4_{\sigma,\phi} : z(r) = -1 - re^{i\phi}
\]
where the parameter $r$ runs $\sigma \leq r \leq \sqrt{\cos^2\phi +\rho^2-1} - \cos\phi$. Using the above parametrizations, and taking the limits we see that
\[
 I^1_k = c^+_{12}\int_{0}^{\rho-1} (1+r)^{-k-1} (e^{-2\pi i(1-\delta)}-1)y_{+2}(1+r)dr \stackrel{x=1+r}{=} c^+_{12}\left(1-e^{-2\pi i(1-\delta)}\right)J^1_k
\]
and similarly
\[
 I^2_k = c^-_{12}\int_{0}^{\rho-1} (-1-r)^{-k-1} (1-e^{-2\pi i(1-\delta)})y_{-2}(-1-r)dr \stackrel{x=-(1+r)}{=} c^-_{12}\left(1-e^{-2\pi i(1-\delta)}\right)J^2_k
\]
where 
\[
 J^1_k = \int_{\rho}^{1} x^{-k-1} y_{+2}(x)dx, \quad J^2_k = \int_{-\rho}^{-1} x^{-k-1} y_{-2}(x)dx
\]
Now, we rewrite $y_{+2}$ and $y_{-2}$ as follows
\begin{equation}\label{representation}
y_{+2}(z) = \left(\sum\limits_{j=0}^m G_j (1-z)^{1-\delta + j} + (1-z)^{2+m-\delta} F_1(z)\right), \quad y_{-2}(z) = \left(\sum\limits_{j=0}^m H_j (1+z)^{1-\delta + j} + (1+z)^{2+m-\delta} F_2(z)\right)
\end{equation}
where $F_1,~F_2$ are analytic functions in the discs of radii 1 centered at $1,~-1$ respectively and $G_0 = 1 = H_0$.
Using the representation for $y_{+2}$ and $y_{-2}$ found in (\ref{representation}), we obtain
\[
J^1_k = \left(\sum\limits_{j=0}^m G_j  \int_\rho^1 x^{-k-1} (1-x)^{1-\delta + j} dx     + \int_\rho^1 x^{-k-1} (1-x)^{2+m-\delta} F_1(x) dx\right)
\]
\[
J^2_k = \left(\sum\limits_{j=0}^m H_j \int_{-\rho}^{-1} x^{-k-1}(1+x)^{1-\delta + j}dx   + \int_{-\rho}^{-1} x^{-k-1} (1+x)^{2+m-\delta} F_2(x)dx \right)
\]
Hence if $\Re(1-\delta)>0$, we may apply Lemma~\ref{integral} to obtain 
\[
J^1_k = -\sum\limits_{j=0}^m G_j  \exp(\pi i(1-\delta + j))\frac{\Gamma(k-j-1+\delta)\Gamma(2-\delta + j)}{\Gamma(k+1)}+\mathcal{O}(\rho^{-k}) + \mathcal{O}(k^{-\Re(3+m-\delta)})
\]
and similarly
\begin{eqnarray*}
J^2_k &\stackrel{z=-v}{=}& \left(\sum\limits_{j=0}^m H_j (-1)^{-k-1} \int_1^{\rho} v^{-k-1}(1-v)^{1-\delta + j}dv + (-1)^{-k-1}\int_1^{\rho} v^{-k-1} (1-v)^{2+m-\delta} F_2(-v)dv \right) \\
&=& \sum\limits_{j=0}^m H_j (-1)^{-k-1} \exp(\pi i(1-\delta + j)) \frac{\Gamma(k-j-1+\delta)\Gamma(2-\delta + j)}{\Gamma(k+1)} +\mathcal{O}(\rho^{-k}) + \mathcal{O}(k^{-\Re(3+m-\delta)})
\end{eqnarray*}
Hence, when we multiply by the ratio $\Gamma(k+1)/\Gamma(k-1+\delta)$
\begin{eqnarray*}
\frac{\Gamma(k+1)}{\Gamma(k-1+\delta)}d_k &=& \mathcal{O}(\rho^{-k}) + \frac{\Gamma(k+1)}{\Gamma(k-1+\delta)}\frac{1}{2\pi i} \left[I^1_k + I^2_k\right] \\
&=& \mathcal{O}(\rho^{-k}) + \frac{\Gamma(k+1)}{\Gamma(k-1+\delta)}\frac{1}{2\pi i} \left[c^+_{12}L^1_k + c^-_{12}L^2_k\right] \\
&\stackrel{\ref{ratio}}{=}& \mathcal{O}(\rho^{-k}) +\mathcal{O}(k^{-m-1})+ \frac{\Gamma(k+1)}{\Gamma(k-1+\delta)} \frac{[1-\exp(-2\pi i(1-\delta))]}{2\pi i} \left[c^+_{12} J^1_k + c^-_{12} J^2_k \right] \\
&=& \frac{\sin[\pi(\delta-1)]}{\pi} \left[c^+_{12} \sum\limits_{j=0}^m \frac{G_j e^{j\pi i}\Gamma(2-\delta+j)}{\prod_{\sigma=1}^j(k-1+\delta-\sigma)} + (-1)^{-k} c^-_{12} \sum\limits_{j=0}^m \frac{H_j e^{j\pi i}\Gamma(2-\delta+j)}{\prod_{\sigma=1}^j(k-1+\delta-\sigma)} \right]+ \\
 &+& \mathcal{O}(\rho^{-k}) +\mathcal{O}(k^{-m-1}) \\ 
\end{eqnarray*}
If $k=2n$ (i.e. k even), we have when we take the limit $n\to \infty$ and using the gamma reflection formula (see (6) at page 3 in \cite{Erdely})
\[
\lim\limits_{n \to \infty} \frac{\Gamma(2n+1)}{\Gamma(2n-1+\delta)}d_{2n} = \frac{(c^+_{12} + c^-_{12})}{\Gamma(\delta-1)}
\]
Also, taking $k=2n+1$ (i.e. k odd) we obtain 
\[
\lim\limits_{n \to \infty} \frac{\Gamma(2n+2)}{\Gamma(2n+\delta)}d_{2n+1} = \frac{(c^+_{12} - c^-_{12})}{\Gamma(\delta-1)}
\]
Using (\ref{coeffs}) we obtain
\[
c^+_{12} + c^-_{12} = \lim\limits_{n \to \infty} \frac{\Gamma(2n+1)\Gamma(\delta-1)}{\Gamma(2n-1+\delta)}\frac{(\frac{\alpha}{2})_n(\frac{\beta}{2})_n}{n!(\frac{\gamma+1}{2})_n}, \quad c^+_{12} - c^-_{12} = 0 \implies c^+_{12} = c^-_{12}
\]
Furthermore using the fact that $(\alpha)_n = \Gamma(\alpha+n)/\Gamma(\alpha)$ we obtain 
\begin{eqnarray*}
c^+_{12} + c^-_{12} &=& \frac{\Gamma(\delta-1)\Gamma([\gamma+1]/2)}{\Gamma(\alpha/2)\Gamma(\beta/2)}\lim\limits_{n \to \infty} \frac{\Gamma(2n+1)\Gamma(\alpha/2+n)\Gamma(\beta/2+n)}{\Gamma(2n-1+\delta)\Gamma(n+1)\Gamma([\gamma+1]/2+n)} \\
&\stackrel{\ref{ratio}}{=}& 2^{2-\delta}\frac{\Gamma(\delta-1)\Gamma([\gamma+1]/2)}{\Gamma(\alpha/2)\Gamma(\beta/2)}
\end{eqnarray*}
This completes the proof.~~$\square$
\end{proof}

\section{The Connection Matrices}\label{connection_matrices}
Maier \cite{Maier} showed the following relations hold
\begin{equation*}
\begin{aligned}
&y_{11}(z) = z^{1-\gamma}Hl(1-a,-q+\alpha\beta+(\gamma-1)(1-a)\delta;1+\alpha-\gamma,1+\beta-\gamma,\delta,2-\gamma;1-z), \\
&y_{a1}(z) = \left(\frac{z}{a}\right)^{1-\gamma}Hl\left(\frac{a}{a-1},Q;1+\alpha-\gamma,1+\beta-\gamma,\alpha+\beta+1-\gamma-\delta,\delta;\frac{z-a}{1-a} \right),
\end{aligned}
\end{equation*}
where
\[
Q = \frac{a(1+\alpha-\gamma)(1+\beta-\gamma)-q-(1-\gamma)(\alpha+\beta+1-\gamma+(a-1)\delta)}{a-1},
\]
and
\[
y_{11}(z) = z^{-\alpha} Hl\left(1-\frac{1}{a},\frac{-q+\alpha[(a-1)\delta+\beta]}{a};\alpha,\alpha+1-\gamma,\delta,1+\alpha-\beta;1-z^{-1}\right),
\]
Furthermore, in the special case $a=-1$, we assert that
\[
y_{a1}(z) = (-z)^{-\alpha}Hl\left(\frac{1}{1-a},\frac{-q+\alpha[(\alpha+1-\gamma-\delta)(1-a)+\beta]}{1-a};\alpha,\alpha+1-\gamma,\epsilon,\delta;\frac{z^{-1}-a}{1-a}\right).
\]
Indeed the transformation $z=1/t$, $w(z) = f(t)$, and $f(t) = t^{\alpha}\phi(t)$ is a symmetry of equation~(\ref{heun}). Applying the transformation, (\ref{heun}) is transformed into a Heun equation with singularities at $\{0,1,1/a,\infty\}$ and that the r.h.s. above is a solution to (\ref{heun}) in a neighborhood of $1/a$. In particular if $a=-1$ then we have another solution in a neighborhood of $-1$. Hence the r.h.s. above must be a linear combination of $y_{a1}$ and $y_{a2}$ in the case $a=-1$. Comparing the behaviours of the functions, it becomes clear that the assertion above is true.
Let
\[
q_1(\alpha,\beta,\gamma) = \frac{\Gamma(\frac{\gamma+1}{2}-\frac{\alpha}{2}-\frac{\beta}{2})\Gamma(\frac{\gamma+1}{2})}{\Gamma(\frac{\gamma+1}{2}-\frac{\alpha}{2})\Gamma(\frac{\gamma+1}{2}-\frac{\beta}{2})}, \quad
q_2(\alpha,\beta,\gamma) = 2^{1-\delta}\frac{\Gamma(\delta-1)\Gamma(\frac{\gamma+1}{2})}{\Gamma(\frac{\alpha}{2})\Gamma(\frac{\beta}{2})},
\]
Then
\begin{equation*}
\begin{aligned}
&Hl(-1,0;\alpha,\beta,\gamma,\delta;z) = q_1(\alpha,\beta,\gamma)Hl(2,\alpha\beta;\alpha,\beta,\delta,\gamma;1-z) + \\
&q_2(\alpha,\beta,\gamma)(1-z)^{1-\delta}Hl(2,\alpha\beta+(1-\delta)(\delta+2\gamma);1+\alpha-\delta;1+\beta-\delta,2-\delta,\gamma;1-z),
\end{aligned}
\end{equation*}
and
\begin{equation*}
\begin{aligned}
&Hl(-1,0;\alpha,\beta,\gamma,\delta;z) = q_1(\alpha,\beta,\gamma)Hl\left(\frac{1}{2},\frac{\alpha\beta}{2};\alpha,\beta,\delta,\delta;\frac{z+1}{2}\right) + \\
&q_2(\alpha,\beta,\gamma)\left(\frac{z+1}{2}\right)^{1-\delta}Hl\left(\frac{1}{2},\frac{\alpha\beta}{2}+(1-\delta)\left(\gamma+\frac{\delta}{2} \right);1+\alpha-\delta,1+\beta-\delta,2-\delta,\delta;\frac{z+1}{2}\right).
\end{aligned}
\end{equation*}
Using the above relations it follows, after some computations, that the connection matrices we seek are
\[
C_{0+} = \begin{pmatrix} q_1(\alpha, \beta, \gamma) & q_2(\alpha, \beta, \gamma) \\ q_1(1+\alpha-\gamma, 1+\beta-\gamma, 2-\gamma) & q_2(1+\alpha-\gamma, 1+\beta-\gamma, 2-\gamma)\end{pmatrix},
\]
\[
C_{0-} = \begin{pmatrix} q_1(\alpha, \beta, \gamma) & q_2(\alpha, \beta, \gamma) \\ (-)^{1-\gamma}q_1(1+\alpha-\gamma, 1+\beta-\gamma, 2-\gamma) & (-)^{1-\gamma}q_2(1+\alpha-\gamma, 1+\beta-\gamma, 2-\gamma)\end{pmatrix},
\]
\[
C_{\infty +} = \begin{pmatrix} q_1(\alpha, \alpha+1-\gamma, 1+\alpha-\beta) & (-)^{\delta-1}q_2(\alpha, \alpha+1-\gamma, 1+\alpha-\beta) \\ q_1(\beta, \beta+1-\gamma, 1+\beta-\alpha) & (-)^{\delta-1}q_2(\beta, \beta+1-\gamma, 1+\beta-\alpha)\end{pmatrix},
\]
\[
C_{\infty -} = \begin{pmatrix} (-)^{-\alpha}q_1(\alpha, \alpha+1-\gamma, 1+\alpha-\beta) & (-)^{\delta-\alpha-1}q_2(\alpha, \alpha+1-\gamma, 1+\alpha-\beta) \\ (-)^{-\beta}q_1(\beta, \beta+1-\gamma, 1+\beta-\alpha) & (-)^{\delta-\beta-1}q_2(\beta, \beta+1-\gamma, 1+\beta-\alpha)\end{pmatrix}.
\]
\begin{remark}
 These matrices remarkably provide a way for us to express $y_{+1}$ and $y_{-1}$ linearly in terms of $y_{01}$ and $y_{02}$. This is interesting because the coefficients appearing in $y_{+1}$ and $y_{-1}$ satisfy a much more general three-term recurrence relation 
 (and perhaps much more difficult to solve) than that of the coefficients of $y_{01}$ and $y_{02}$. Hence it would be difficult to express $y_{+1}$ and $y_{-1}$ in closed form if one only had (\ref{recurrence}) to rely on. In actuality, our matrices allow $y_{+1}$ and $y_{-1}$ to be expressed in closed form in terms of $y_{01}$ and $y_{02}$, which have closed form expressions. 
\end{remark}

\section{Conclusion}
In summary, we have modified the methods used by \cite{Schmidt2} to explicitly solve the two-point connection problem for a subclass of the Heun equation. To the best of our knowledge, this is the first time explicit expressions for the connection coefficients for even a subclass of the Heun equation has been given. We emphasize that the subclass of the Heun equation we have considered has 3 free paramters (just as many as the Hypergeometric equation). Our results will likely have many applications in a diverse range of fields, notably in mathematical physics where the solutions of important equations may be given in terms of Heun functions. The connection matrices we have found here will enable the construction of global analytic solutions of such equations. Take, for example, the work of \cite{BWN} where the most general class of potential was given such that the solution of the one-dimensional Schr\"odinger equation may be expressed in terms of Heun functions. Our results should enable the computation of 
bound states and energy eigenvalues, and the study of scattering and tunneling phenomena for a subclass of the potentials derived by \cite{BWN}. 

\begin{center} \large{{\bf Acknowledgements}} \end{center}
One of the authors (R. Williams) would like to thank Prof. Fabio Vlacci of the Mathematics Department, University of Florence (Florence, Italy) for some discussions they had at the International Center for Theoretical Physics (Trieste, Italy).

\bibliography{refs}

\end{document}